\documentclass[letterpaper,10pt,conference]{ieeeconf}
\IEEEoverridecommandlockouts          
\usepackage{cite,mathtools,amssymb,amsfonts,mathrsfs,algorithm,algorithmic,graphicx,textcomp}
\usepackage[hidelinks]{hyperref}
\usepackage{pgfplots}
\pgfplotsset{compat=newest}

\makeatletter
\providecommand\IEEEkeywordsname{Index Terms---}
\newenvironment{IEEEkeywords}
 {\par\addvspace{1.5ex}\noindent\footnotesize\IEEEkeywordsname\ignorespaces}
 {\par\addvspace{1.5ex}}
\makeatother

\newtheorem{proposition}{Proposition}

\newtheorem{definition}{Definition}

\newtheorem{remark}{Remark}

\newcounter{experiment}[section]

\usepackage{lettrine}  
\DeclareMathOperator{\argmin}{arg\,min}
\DeclareMathOperator{\vspan}{span}

\begin{document}

\title{A Dynamic Mode Decomposition Approach to Parameter Identification}
\author{
    Moad Abudia\textsuperscript{1},
    Opeyemi Owolabi\textsuperscript{2}, Joel A. Rosenfeld\textsuperscript{3},
    and Rushikesh Kamalapurkar\textsuperscript{2}%
    \thanks{This research was supported in part by the Air Force Research Laboratory under contract number FA8651-24-1-0019. Any opinions, findings, or recommendations in this article are those of the author(s), and do not necessarily reflect the views of the sponsoring agencies.}%
    \thanks{\textsuperscript{1}School of Mechanical and Aerospace Engineering,
        Oklahoma State University,
        Stillwater, OK, 74074, USA
        (e‑mail: {\tt abudia@okstate.edu}).}%
    \thanks{\textsuperscript{2}Department of Mechanical and Aerospace Engineering, University of Florida,
        Gainesville, FL, 32611, USA (e‑mail: {\tt opeyemi.owolabi@ufl.edu}, {\tt rkamalapurkar@ufl.edu}).}%
        \thanks{\textsuperscript{3}Department of Mathematics and Statistics, University of South Florida, Tampa, FL, USA (e‑mail: {\tt rosenfeldj@usf.edu}).}%
}

\maketitle

\thispagestyle{empty}
\pagestyle{empty}
\begin{abstract}
This paper presents a data-driven algorithm for simultaneous system identification and parameter estimation in control-affine nonlinear systems.
Parameter estimation is achieved by training a data-driven predictive model using state-action measurements and various known values at the parameters of interest. The predictive model is then used in conjunction with state-action data corresponding to unknown values of the parameters to estimate the said unknown value.
Numerical experiments on the controlled Duffing oscillator with unknown damping, stiffness, and nonlinearity coefficients demonstrate accurate recovery of both the system trajectories and the unknown parameter values from data collected under open-loop excitation.
\end{abstract}

\begin{IEEEkeywords}
Dynamic mode decomposition, Koopman operator, reproducing kernel Hilbert spaces, system identification, parameter identification, control-affine systems, data-driven methods.
\end{IEEEkeywords}

\section{Introduction} \label{sec:intro}

Parameter estimation from data is a mature field, with a variety of well-established techniques for estimation of process parameters and construction of models from input-output data \cite{SCC.Astrom.Eykhoff1971}. This groundwork was subsequently formalized through gray-box modeling, prediction-error techniques, and recursive estimation \cite[Ch.~7]{SCC.Ljung1999}. Simultaneously, adaptive control introduced online update laws for adjusting parameters in uncertain systems during regulation and tracking \cite[Chs.~1,~6--7]{SCC.Ioannou.Sun1996}. In nonlinear control, such adaptive parameter update laws had a particularly significant impact in structured settings such as robot manipulator control, where adaptive designs leverage known regressors for uncertain dynamics \cite{SCC.Slotine.Li1987}. 
Unlike traditional parameter estimation problems where the system dynamics depend on the parameters in a known way( e.g., linearly via known regressors\cite{SCC.Ogri.Bell.ea2023_} or nonlinearly via known neural network architectures\cite{SCC.Raissi.Perdikaris.ea2019} and basis functions\cite{SCC.Zhou.Wang.ea2021}), we make no assumptions regarding the relationship between the parameters and the system dynamics beyond continuity
Nevertheless, across classical and modern methods alike, prior knowledge of how the unknown parameters enter the dynamics remains a persistent requirement.

Many practical systems involve unknown parameters whose coupling to the dynamics is difficult to express in an explicit form. For example, a drone carrying an unknown payload experiences changes in mass and inertia that alter its flight dynamics in ways that depend nonlinearly on the payload configuration \cite{SCC.Alder.Rock1993}. In robotic and building energy systems, unknown payload dynamics and occupancy-driven thermal loads can similarly modify closed-loop behavior in ways that are difficult to characterize a priori \cite{SCC.Dai.Lee.ea2014, SCC.Klanatsky.Veynandt.ea2023, SCC.Wietzke.Gall.ea2024}.  These considerations motivate a fully data-driven approach capable of identifying parameters directly from input-output data, without requiring prior knowledge of how those parameters couple with the system dynamics.

The data-driven character of operator-theoretic methods makes them a natural candidate to solve this problem, as they construct global representations of nonlinear dynamics directly from trajectory data without requiring an explicit parametric model. Although such methods have matured significantly for control design, parameter identification remains largely separate from this framework. Classical system identification \cite[Ch.~7]{SCC.Ljung1999}, Bayesian estimation \cite{SCC.Bisaillon.Robinson.ea2024, SCC.Tripura.Chakraborty2023}, and neural inverse-problem methods \cite{SCC.Raissi.Perdikaris.ea2019, SCC.Gedon.Wahlstrom.ea2021} provide tools for recovering unknown physical parameters from data, but treat parameter recovery as a stage separate from operator-theoretic representations of the closed-loop dynamics. This paper addresses that gap by developing an operator-theoretic method for simultaneous system and parameter identification in control-affine nonlinear systems, while preserving the fully data-driven character of the underlying framework.
 
Among recent operator-theoretic approaches, the Discrete Control Liouville Dynamic Mode Decomposition (DCLDMD) framework of \cite{SCC.Morrison.Abudia.ea2024} provides a particularly strong foundation for controlled nonlinear systems. Building on the operator-theoretic formulations \cite{SCC.Mauroy.Goncalves2020, SCC.Rosenfeld.Kamalapurkar.ea2022, SCC.Gonzalez.Abudia.ea2024, SCC.Abudia.Rosenfeld.ea2024}, DCLDMD constructs a finite-rank approximation of the closed-loop control Liouville operator from data collected under arbitrary open-loop excitation, without requiring a separate model-reduction step. This approximation is achieved through a kernel propagation operator together with a multiplication operator that encodes the effect of a feedback law, and the resulting construction inherits strong convergence properties from the underlying RKHS framework \cite{SCC.Rosenfeld.Kamalapurkar.ea2022, SCC.Carmeli.DeVito.ea2010}.

These features make DCLDMD well suited for parameter identification when unknown physical parameters are viewed as latent components of the dynamics. Motivated by this observation, this paper extends DCLDMD to simultaneous system and parameter identification, yielding the so-called Discrete Control Liouville Dynamic Mode Decomposition with Parameter Identification (DCLDMDPID) algorithm. The key idea is to augment the system state with the unknown physical parameters and embed the resulting augmented dynamics directly into the RKHS and vector-valued RKHS operator structure \cite{SCC.Carmeli.DeVito.ea2010}. The resulting model can predict system trajectories corresponding to new values of the parameter starting from new initial conditions within the domain of validity of the model. This predictive capability is exploited to identify the most likely set of parameters given state-action data Numerical results on a controlled Duffing oscillator demonstrate accurate recovery of both the state evolution and the unknown parameter values from open-loop data.

This paper is organized as follows. Section~\ref{sec:Background} establishes the mathematical background for dynamic mode decomposition with discrete control Liouville operators. Section~\ref{sec:ProbS} contains the problem description. Section~\ref{sec:Finite-rank Representation} provides the derivation of the finite-rank representation of the control Liouville operator, as well as the DCLDMDPID algorithm. Section~\ref{sec:NumExp} presents numerical experiments on the Duffing oscillator with unknown parameters. Section~\ref{sec:concl} concludes this paper.

\section{Overview} \label{sec:ProbS}
\subsection{Problem Statement}
Consider a nonlinear control-affine discrete-time dynamical system of the form
\[z_{k+1}=f_0(z_k,\theta) + g_0(z_k,\theta)u_k,\]
where \(f_0\) and \(g_0\) are unknown continuous functions and \(\theta\) denotes a vector of parameters of interest. Unlike traditional system identification and parameter estimation problems, where the uncertain dynamics are expressed as linear or nonlinear combinations of known basis functions and unknown parameters, we do not make any assumptions on the structure of the unknown functions beyond continuity. 

The objective in this manuscript is to learn the current set of parameters $\pi \in \mathbb{R}^p$ using an observer trajectory of the system, i.e., measurements of the system state and the input expressed in the form of a \emph{query dataset} $\{ u_i, z_i,w_i\}_{i=1}^N$, where \begin{equation} \label{eq:working_dataset}
    w_i\coloneqq z_{i+1} = F(z_i,u_i,\pi) = f_0(z_i,\pi) + g_0(z_i,\pi) u_i.
\end{equation}
For example, in the case of a package delivery drone, the parameter of interest \(\theta\) could correspond to mass of a package, and the current parameter \(\pi\) could be the mass of the package currently being delivered. The objective is then to estimate \(\pi\), the current mass.

\subsection{Approach}
In this paper, we solve the problem above by training a model that encodes the structural relationship between the parameters and the dynamical system. To train such a model, we need a \emph{training dataset} comprised of system trajectories $\{ u_i^j, z_i^j, w_i^j ,\pi^j\}_{j=1,i=1}^{P,M}$, where $\pi^j$ are vectors of known values of the parameters of interest, satisfying
\begin{equation} \label{eq:training_data}
    y_i^j\coloneqq x_{i+1}^j = F(x_i^j,u_i^j,\pi^j) = f(x_i^j,\pi^j) + g(x_i^j,\pi^j) u_i^j.
\end{equation}
For example, in the case of the package delivery drone, we can record a training dataset by flying the drone with packages of varying known masses \(\pi^j\). 

We then train a model \(\hat{F}:\mathbb{R}^n\times\mathbb{R}^m\times\mathbb{R}^p\to\mathbb{R}^n\) to estimate \(F\) using the training dataset and use it to infer a new, unknown mass that corresponds to the query dataset described in the previous section.

In order to learn the parameters $\pi$, we need to solve the following optimization problem
\begin{equation} \label{eq:parameters-diffrence-minimization}
    \hat{\pi} = \argmin _{\eta\in \Pi} \frac{1}{N}\sum_{i=1}^N(F(z_i,u_i,\eta)-F(z_i,u_i,\pi))^2,
\end{equation}
where $\Pi \subset \mathbb{R}^p$ is a known set containing $\pi$. Since $F$ is unknown a data-driven optimization problem is formulated as
\begin{equation} \label{eq:parameters-diffrence-minimization-with-sysID}
    \hat{\pi }= \argmin _{\eta\in \Pi} \frac{1}{N}\sum_{i=1}^N(\hat{F}(z_i,u_i,\eta)-w_i)^2.
\end{equation}
In this paper, we adapt the DCLDMD technique developed in \cite{SCC.Morrison.Abudia.ea2024}, to generate the model \(\hat{F}\). In particular, \cite{SCC.Morrison.Abudia.ea2024 } considered modeling of closed dynamical systems under feedback control laws, whereas in this paper, we adapt the techniques in \cite{SCC.Morrison.Abudia.ea2024} to model open dynamical systems.

\section{Background} \label{sec:Background}
In this section, we provide a brief overview of  RKHSs and vvRKHSs and their role in DCLDMD.

\begin{definition}\label{Def:RKHS}
An RKHS $\Tilde{H}$ over a set $X\subset \mathbb{R}^{n}$ is a Hilbert space of functions $f: X\to \mathbb{C}$ such that for all $x\in X$ the evaluation functional $E_{x}f \coloneqq f(x)$ is bounded. By the Riesz representation theorem, there exists a function $\Tilde{K}_{x} \in \Tilde{H}$ such that $f(x) ={\langle f, \Tilde{K}_{x} \rangle}_{\Tilde{H}}$ for all $f\in \Tilde{H}$. 
\end{definition}

The snapshots of a dynamical system are embedded into an RKHS via a kernel map $x \mapsto \Tilde{K}(\cdot,x) \coloneqq \Tilde{K}_{x}$. Moreover, the span of the set $\{\Tilde{K}_{x} : x \in X\}$ is dense in $\Tilde{H}$.
\begin{proposition}\label{Prop:Kdensity}
    If $A \coloneqq \{\Tilde{K}_{x} : x \in X\}$, then $\text{span }A = \Tilde{H}$.
\end{proposition}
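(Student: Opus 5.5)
The statement should be read as density: the closure of $\text{span }A$ equals $\Tilde{H}$. The algebraic span of kernel functions is generally a proper subspace when $X$ is infinite, and the sentence preceding the proposition says ``dense.'' The plan is the standard orthogonal-complement argument. In a Hilbert space, a subspace $S$ has closure $\Tilde{H}$ if and only if $S^{\perp}=\{0\}$. Since $(\text{span }A)^{\perp}=A^{\perp}$ and $\overline{\text{span }A}=(A^{\perp})^{\perp}$, it suffices to show $A^{\perp}=\{0\}$.

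The key step uses the reproducing property from Definition~\ref{Def:RKHS}. Let $f\in\Tilde{H}$ with $\langle f,\Tilde{K}_x\rangle_{\Tilde{H}}=0$ for every $x\in X$. By the reproducing property, $f(x)=\langle f,\Tilde{K}_x\rangle_{\Tilde{H}}=0$ for all $x\in X$. Elements of $\Tilde{H}$ are genuine functions on $X$, not equivalence classes, so $f$ vanishing pointwise on $X$ means $f$ is the zero vector of $\Tilde{H}$. Hence $A^{\perp}=\{0\}$.

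It then follows that $\overline{\text{span }A}=(A^{\perp})^{\perp}=\{0\}^{\perp}=\Tilde{H}$. The standard projection-theorem identity $\overline{S}=(S^{\perp})^{\perp}$ for a subspace $S$ of a Hilbert space can be cited directly. Alternatively, one can decompose any $h\in\Tilde{H}$ as $h=p+q$ with $p\in\overline{\text{span }A}$ and $q\perp\overline{\text{span }A}$, and then conclude $q=0$ from the previous step.

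The only delicate point is interpretive rather than technical. The proof must establish density, i.e.\ equality with the closure. I would state explicitly that ``$\text{span }A$'' in the proposition denotes the closed linear span, since the literal algebraic claim is false in general, for example for the Gaussian RKHS on $\mathbb{R}^n$. Beyond that, no step is a real obstacle. The argument requires only the reproducing property and the fact that $\Tilde{H}$ is a space of functions, so that pointwise zero implies the zero element.
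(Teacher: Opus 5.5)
Your proposal is correct and follows the paper's argument: the paper also reads the statement as density, reduces it to showing $A^{\perp}=\{0\}$, uses the reproducing property to get $h(x)=\langle h,\tilde{K}_x\rangle_{\tilde{H}}=0$ for every $x\in X$, and concludes that $(A^{\perp})^{\perp}=\tilde{H}$. Your remark that ``$\text{span }A$'' must mean the closed span agrees with how the paper's own proof treats the statement.
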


\begin{proof}
    To show that the span of the set $\{\tilde{K}_{x} : x \in X\}$ is dense in $\Tilde{H}$ amounts to showing that $(A^{\perp})^{\perp} = \Tilde{H}$. Let $h \in A^{\perp}$, then $\langle h, \Tilde{K}_{x}\rangle = h(x) = 0$. Hence $h\equiv 0$ on $X$. Thus $A^{\perp} = \{0\}$.
\end{proof}

In order to account for the effect of control, we make use of a vvRKHS.
\begin{definition}\label{Def:vvRKHS}
Let $\mathcal{Y}$ be a Hilbert space, and let $H$ be a Hilbert space of functions from a set $X$ to $\mathcal{Y}$. The Hilbert space $H$ is a \emph{vvRKHS} if for every $\bar{u} \in \mathcal{Y}$ and $x \in X$, the functional $f \mapsto \langle f(x), \bar{u} \rangle_{\mathcal{Y}}$ is bounded.
\end{definition}

To each $x \in X$ and $\bar{u} \in \mathcal{Y}$, we can associate a linear operator over a vvRKHS given by $(x,\bar{u}) \mapsto K_{x,\bar{u}}$, following \cite{SCC.Rosenfeld.Kamalapurkar2021}. The function $K_{x,\bar{u}}$ is known as the kernel operator and the span of these functions constitutes a dense set in the respective vvRKHS \cite[Proposition 1]{SCC.Rosenfeld.Kamalapurkar2021}. Given a function $f \in H$, the reproducing property of $K_{x,\bar{u}}$ implies ${\langle f, K_{x,\bar{u}} \rangle}_{H} = {\langle f(x), \bar{u} \rangle}_{\mathcal{Y}}$.  For more discussion on vvRKHSs see \cite{SCC.Carmeli.DeVito.ea2010}.

Consider the exponential dot product kernel with parameter $\tilde{\rho}$, defined as $\tilde{K}_{\tilde{\rho}}(x,y) = \exp\left(\frac{x^{\top}y}{\tilde{\rho}}\right)$. In the single variable case, the RKHS of this kernel is the restriction of the Bargmann-Fock space to real numbers, denoted by $F^2_{\tilde{\rho}}\left(\mathbb{R}\right)$. This space consists of the set of functions of the form $h(x) = \sum_{k=0}^\infty a_k x^k$, where the coefficients satisfy $\sum_{k=0}^\infty \left\vert a_k\right\vert^2 \tilde{\rho}^k k! < \infty$, and the norm is given by $ \left\Vert h \right\Vert^{2}_{\tilde{\rho}} = \sum_{k=0}^\infty \left\vert a_k\right\vert^2 \tilde{\rho}^k k! $. Note that the set of polynomials in $x$ is a subset of $F^2_{\tilde{\rho}}\left(\mathbb{R}\right)$. Extension of this definition to the multivariable case yields the space $F^2_{\tilde{\rho}}\left(\mathbb{R}^n\right)$ where the collection of monomials, $x^{\alpha} \frac{\tilde{\rho}^{|\alpha|}}{\sqrt{\alpha!}}$, with multi-indices $\alpha \in \mathbb{N}^n$ forms an orthonormal basis.
In this setting, provided  $\tilde{\rho}_2 < \tilde{\rho}_1$, differential operators from $F^2_{\tilde{\rho}_1}(\mathbb{R}^n)$ to $F^2_{\tilde{\rho}_2}(\mathbb{R}^n)$ can be shown to be compact \cite[see Proposition 8]{SCC.Rosenfeld.Kamalapurkar2021}.\cite{SCC.Carmeli.DeVito.ea2010}.

\section{The Control Liouville Operator and its Finite-rank Representation} \label{sec:Finite-rank Representation}
In order to facilitate the description of the controlled dynamical system in terms of Koopman-like operators, the vvRKHS framework is utilized . 
\begin{definition}
    Given an operator $A_{f,g}:\tilde{H}_{d}\to H$, the tuples $\{(\sigma_i,\phi_i,\psi_i)\}_{i=1}^\infty$, with $\sigma_i\in \mathbb{R}^n$, $\phi_i\in \tilde{H}_{d}$, and $\psi_i \in H$, are singular values, left singular vectors, and right singular vectors of $A_{f,g}$, respectively, if $\forall h\in\vspan{d}$, 
    \begin{equation}   
    A_{f,g} h = \sum_{i=1}^\infty \sigma_i \psi_i \left\langle h,\phi_i\right\rangle_{\tilde{H}_{d}}. \label{eq:infinite_spectral_reconstruction_svd}
    \end{equation}
\end{definition}
Let $h_{\mathrm{id}}:\mathbb{R}^n \to \mathbb{R}^n$ be the identity function. Given singular values, left singular vectors, and right singular vectors of $A_{f,g}$ and a control input $u$, the dynamics of the system can be expressed as
\begin{multline}
    x_{k+1}=y_k = \begin{pmatrix}A_{f,g} (h_{\mathrm{id}})_1 (x_k)\\\vdots\\A_{f,g} (h_{\mathrm{id}})_n(x_k)\end{pmatrix} \begin{pmatrix}
        1\\u_k
    \end{pmatrix} \\
    = \begin{pmatrix}
    \sum_{i=1}^\infty \sigma_i \psi_i(x_k) \left\langle (h_{\mathrm{id}})_1,\phi_i \right\rangle_{\tilde{H}_{d}}\\\vdots\\ \sum_{i=1}^\infty \sigma_i \psi_i (x_k)\left\langle (h_{\mathrm{id}})_M,\phi_i\right\rangle_{\tilde{H}_{d}}
    \end{pmatrix}\begin{pmatrix}
        1\\u_k
    \end{pmatrix}
\end{multline}
To facilitate computation, an explicit finite-rank representation of $A_{f,g}$ is needed to determine the dynamic modes of the resultant system. In the following, finite collections of linearly independent vectors, $d^M$ and $\beta^M$ are selected to establish the needed finite-rank representation. 
\begin{equation}
    d^M = \left\{K_d(\cdot,x_i) \right\}_{i=1}^M\subset \tilde{H}_{d}
\end{equation}
is selected to be the domain of  $ A_{f,g} $. The corresponding Gram matrix is denoted by $G_{d^M} = \left(\left\langle d_i,d_j\right\rangle_{\tilde{H}_d}\right)_{i,j=1}^M$. The output of $A_{f,g}$ is projected onto the span of vector valued kernels
\begin{equation}
    \beta^M = \left\{K_{x_i,\overline{u}_i}\right\}_{i=1}^M=\left\{\mathrm{diag}(K(\cdot,x_i) \cdot\overline{u}_i\right\}_{i=1}^M\subset H.
\end{equation}
The corresponding Gram matrix is denoted by  $G_{\beta^M} = \left(\left\langle\beta_i,\beta_j\right\rangle_{H}\right)_{i,j=1}^M$.

A rank-$M$ (or less) representation of the operator $A_{f,g}$ is then given by  $P_{\beta^M} A_{f,g} P_{d^M}:\tilde{H}_d\to\vspan \beta^M$, where $P_{d^M}$ and $P_{\beta^M}$ denote projection operators onto $\vspan{d^M}$ and $\vspan{\beta^M}$, respectively.

\section{Matrix Representation of the Finite-rank Operator} \label{sec:Matrix Representation}
 To formulate a matrix representation of the finite-rank operator $ P_{\beta^M} A_{f,g}P_{d^M} $, the operator is restricted to $\vspan d^M$ to yield the operator $ P_{\beta^M} A_{f,g}|_{d^M}:\vspan{d^M}\to\vspan{\beta^M}$. For brevity of exposition, the superscript $M$ is suppressed hereafter and $d$ and $\beta$ are interpreted as $M-$dimensional vectors. 
 
 The adjoint relationship can be derived as follows,
 $\langle  d_i, A_{f,g}^*\beta_k\rangle_{\tilde{H}_d}=\langle A_{f,g} d_i, \beta_k\rangle_H=\langle [A_{f,g} d_i](x_k),\begin{pmatrix}1&u_k^{\top}\end{pmatrix})\rangle_{\mathbb{R}^{m+1}}=K_d(x_i,x_{k+1})$ as shown \cite{SCC.Morrison.Abudia.ea2024}. Therefore, $ A_{f,g}^* \beta_i = A_{f,g}^* K_{x_i,\overline{u}_i} =K_d(\cdot,y_i)$, where $y_i=F(x_i,u_i)$.
\begin{proposition}
If $ h = \delta^\top d\in \vspan{d}$ is a function with coefficients $\delta\in\mathbb{R}^M$ and if $g = P_\beta A_{f,g} h$, then $g = a^{\top}\beta$, where $a = G_\beta^{+} I \delta$ and $(\cdot)^{+}$ denotes the Moore-Penrose pseudoinverse.
\end{proposition}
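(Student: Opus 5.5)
The plan is to use the normal equations that characterize orthogonal projection onto a finite-dimensional subspace. Here $I$ is read as the interaction matrix $I = \left(\left\langle A_{f,g} d_j, \beta_i\right\rangle_H\right)_{i,j=1}^M$. By the adjoint relation derived just before the statement, its entries are $I_{ij} = \langle d_j, A_{f,g}^*\beta_i\rangle_{\tilde{H}_d} = K_d(x_j,y_i)$.

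First, since $P_\beta$ is the orthogonal projection onto $\vspan\beta$, the function $g = P_\beta A_{f,g} h$ lies in $\vspan \beta$. So there is at least one coefficient vector $c\in\mathbb{R}^M$ with $g = c^\top\beta$. Orthogonality of the residual $A_{f,g}h - g$ to every $\beta_i$ gives the normal equations
\[
\left\langle c^\top\beta, \beta_i\right\rangle_H = \left\langle A_{f,g} h, \beta_i\right\rangle_H,\qquad i=1,\dots,M.
\]
In matrix form this reads $G_\beta c = b$, where $b_i \coloneqq \langle A_{f,g}h,\beta_i\rangle_H$. The Gram matrix is real symmetric because the kernels are real-valued, so its index convention does not matter.

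Second, I will compute $b$ using linearity of $A_{f,g}$ and $h=\sum_j \delta_j d_j$:
\[
b_i = \sum_{j=1}^M \delta_j \langle A_{f,g} d_j,\beta_i\rangle_H = \sum_{j=1}^M \delta_j \langle d_j, A_{f,g}^*\beta_i\rangle_{\tilde{H}_d} = \sum_{j=1}^M I_{ij}\delta_j .
\]
The middle equality is where the adjoint identity $A_{f,g}^*\beta_i = K_d(\cdot,y_i)$ from the preceding paragraph enters. Hence $b = I\delta$, and the coefficients of $g$ are exactly the solutions of $G_\beta c = I\delta$.

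Finally, I will justify choosing $a = G_\beta^+ I\delta$. If the $\beta_i$ are linearly independent, as assumed when $\beta^M$ was selected, then $G_\beta$ is invertible and $G_\beta^+ = G_\beta^{-1}$, so $a$ is the unique coefficient vector.

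For robustness I will also handle the possibly singular case. The main point is that $I\delta = G_\beta c$ lies in the range of $G_\beta$, by the existence of $c$ from the first step. Since $G_\beta G_\beta^+$ is the orthogonal projector onto $\operatorname{range}(G_\beta)$, it follows that $G_\beta a = G_\beta G_\beta^+ G_\beta c = G_\beta c$. Then $a - c\in\ker G_\beta$, and $\|(a-c)^\top\beta\|_H^2 = (a-c)^\top G_\beta (a-c) = 0$. Therefore $a^\top\beta = c^\top\beta = g$.

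This range/kernel argument for the pseudoinverse, together with keeping the row/column indexing of $I$ consistent with the adjoint formula, is the only delicate step. Everything else is routine linear algebra.
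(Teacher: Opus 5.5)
Your proof is correct and follows the same route as the paper: characterize the coefficients of $g$ by the normal equations $G_\beta a = \left(\langle A_{f,g}h,\beta_i\rangle_H\right)_{i=1}^M$, evaluate the right-hand side via the adjoint identity $A_{f,g}^*\beta_i = K_d(\cdot,y_i)$, and select the pseudoinverse solution. Your version is also more careful in two places: fixing $I_{ij} = \langle d_j, A_{f,g}^*\beta_i\rangle_{\tilde{H}_d}$ makes the statement's $G_\beta^{+}I\delta$ internally consistent (the paper's proof, whose definition of $I$ is garbled, ends with $G_\beta^{+}I^{\top}\delta$), and your range/kernel argument justifies something the paper leaves implicit, namely that the pseudoinverse choice solves the consistent system and gives the same $g$ even when $G_\beta$ is singular.
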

\begin{proof}
    Since $ A_{f,g}^* \beta_j = A_{f,g}^* K_{x_i,\overline{u}_i} =K_d(\cdot,y_i)$, where $y_i=F(x_i,u_i)$. Note that since $g$  is a projection of $ A_{f,g} h$ onto $\vspan \beta$, $g = a^{\top}\beta$ for any $a$ that solves
    \begin{multline} \label{eq:alpha_projection}
        G_\beta a = \begin{bmatrix}
        \left\langle A_{f,g}h,\beta_1\right\rangle_{H}\\\vdots\\\left\langle A_{f,g}h,\beta_M\right\rangle_{H}
        \end{bmatrix} = \begin{bmatrix}
            \left\langle h,A_{f,g}^*\beta_1\right\rangle_{H}\\\vdots\\\left\langle  h,A_{f,g}^*\beta_M\right\rangle_{H}
        \end{bmatrix} \\= \begin{bmatrix}
            \left\langle h,A_{f,g}^*K_{x_1,\overline{u}_1}\right\rangle_{H}\\\vdots\\\left\langle  h,A_{f,g}^*K_{x_M,\overline{u}_M}\right\rangle_{H}
        \end{bmatrix} 
    \end{multline}
    Using the adjoint relationship,
    \begin{equation}\label{eq:beta_projection}
        G_\beta a = \begin{bmatrix}
            \left\langle h,K_d(\cdot,y_1)\right\rangle_{H}\\\vdots\\\left\langle  h,K_d(\cdot,y_M)\right\rangle_{H}
        \end{bmatrix} = \begin{bmatrix}
            \left\langle \delta^\top d,K_d(\cdot,y_1)\right\rangle_{H}\\\vdots\\\left\langle  \delta^\top d,K_d(\cdot,y_M)\right\rangle_{H}
        \end{bmatrix} = I\delta,
    \end{equation}
    where $I=\left(\left\langle K_d(\cdot,x_i),K_d(\cdot,y_i)\right\rangle_{\tilde{H}_d}\right)_{i,j=1}^M$
    Selecting the solution 
    \begin{equation}
        a = G_\beta^+I^{\top}\delta\label{eq:b_matrix}
    \end{equation}
    of \eqref{eq:beta_projection}, a matrix representation $[P_\beta A_{f,g}]_d^\beta$ of the operator $P_\beta A_{f,g}|_d$ is obtained as $G_\beta^{+} I^{\top}$.
\end{proof}
Note that matrix representations are generally not unique. Different representations may be obtained by selecting different solutions of \eqref{eq:alpha_projection} and \eqref{eq:b_matrix}. In the case where the Gram matrix $G_{\beta}$ is nonsingular, equation \eqref{eq:alpha_projection} has a unique solutions, resulting in the unique matrix representation $G_\beta^{-1} I^\top$.

In the following section, the matrix representation $[P_\beta A_{f,g}]_d^\beta$ is used to construct a data-driven representation of the singular values and the left and right singular functions of $P_\beta A_{f,g}\vert_d$. 

\section{Pseudo-Singular Functions of the Finite-rank Operator} \label{sec:Pseudo-SVD}
The following proposition states that the SVD-like decomposition of $P_\beta A_{f,g}|_d$ can be computed using matrices in the matrix representation $[P_\beta A_{f,g}]_d^\beta$ derived in the previous section.
\begin{proposition}
    If $ (W,\Sigma,V) $ is the SVD of $G_\beta^{+} I^{\top} G_d^{+}$ with $W = \begin{bmatrix} w_1,&\ldots,&w_M \end{bmatrix}$, $V = \begin{bmatrix} v_1,&\ldots,&v_M \end{bmatrix}$, and $\Sigma = \mathrm{diag}\left(\begin{bmatrix} \sigma_1,&\ldots,&\sigma_M \end{bmatrix}\right)$, then for all $i=1,\ldots,M$, $\sigma_i$ are pseudo-singular values of $P_\beta A_{f,g}|_d$ with left pseudo-singular functions $\phi_i := v_i^\top d$ and right pseudo-singular functions $\psi_i := w_i^\top \beta$.
\end{proposition}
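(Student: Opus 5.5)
The plan is to verify the defining identity \eqref{eq:infinite_spectral_reconstruction_svd}, truncated to $M$ terms and with $P_\beta A_{f,g}$ in place of $A_{f,g}$, for every $h\in\vspan d$. That is, we show
\[
P_\beta A_{f,g} h=\sum_{i=1}^M \sigma_i \psi_i \langle h,\phi_i\rangle_{\tilde H_d}\qquad\text{for all } h\in\vspan d .
\]
The word ``pseudo'' signals that $\phi_i$ and $\psi_i$ need not be orthonormal in $\tilde H_d$ and $H$. Only this reconstruction identity will be established. The argument reduces both sides to coefficient vectors with respect to $\beta$ and compares them.

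First, fix $h=\delta^\top d$ with $\delta\in\mathbb{R}^M$. By the preceding proposition, $P_\beta A_{f,g}h=a^\top\beta$ with $a=G_\beta^{+}I^\top\delta$.

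Next, compute the right-hand side. For each $i$,
\[
\langle h,\phi_i\rangle_{\tilde H_d}=\langle \delta^\top d, v_i^\top d\rangle_{\tilde H_d}=v_i^\top G_d\,\delta ,
\]
using symmetry of the real Gram matrix $G_d$. Hence
\[
\sum_i \sigma_i\psi_i\langle h,\phi_i\rangle=\sum_i \sigma_i (w_i^\top\beta)(v_i^\top G_d\delta)=\big(W\Sigma V^\top G_d\delta\big)^\top\beta .
\]
Since $W\Sigma V^\top=G_\beta^{+}I^\top G_d^{+}$, the right-hand side has coefficient vector $G_\beta^{+}I^\top G_d^{+}G_d\,\delta$.

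The remaining step is to show that $G_\beta^{+}I^\top G_d^{+}G_d\,\delta$ and $G_\beta^{+}I^\top\delta$ give the same element of $\vspan\beta$. If $d$ is linearly independent, as assumed when $d^M$ was chosen, then $G_d$ is invertible. In that case $G_d^{+}G_d=I_M$, and the two coefficient vectors coincide.

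I expect this step to be the main obstacle in the degenerate case. If $G_d$ is singular, decompose $\delta$ into its component in $\operatorname{range}(G_d)$ and its component in $\ker G_d$. A vector $\delta_0\in\ker G_d$ gives $\delta_0^\top d=0$ as a function. Each entry $(I^\top\delta_0)_j$ is an inner product of $\delta_0^\top d$ with the kernel function $K_d(\cdot,y_j)$, so $I^\top\delta_0=0$. Thus $I^\top$ annihilates $\ker G_d$. Because $G_d^{+}G_d$ is the orthogonal projection onto $\operatorname{range}(G_d)=(\ker G_d)^\perp$, this gives $I^\top G_d^{+}G_d=I^\top$, and the identity holds for every $\delta$.

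Along the way I will also check the index convention of $I$: I take its entries to be $\langle d_i, K_d(\cdot,y_j)\rangle$, with the transpose handled consistently. Finally, the SVD of $G_\beta^{+}I^\top G_d^{+}$ supplies nonnegative real $\sigma_i$, so the $\sigma_i$ qualify as pseudo-singular values, which completes the argument.
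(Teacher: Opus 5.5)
Your proposal is correct and follows essentially the same route as the paper: both reduce the identity to a comparison of coefficient vectors with respect to $\beta$, use $\langle \delta^\top d, v_i^\top d\rangle_{\tilde H_d} = v_i^\top G_d\delta$, and conclude from $W\Sigma V^\top = G_\beta^{+} I^\top G_d^{+}$ once $G_\beta^{+}I^\top G_d^{+}G_d = G_\beta^{+}I^\top$ is known. The paper's last implication uses that equality without comment, relying on the linear independence of $d$ (invertible $G_d$); your observation that $I^\top$ annihilates $\ker G_d$ correctly establishes it even when $G_d$ is singular.
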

\begin{proof}
    Let $\phi_i = v_i^\top d$ and $\psi_i = w_i^\top \beta$ and $h = \delta^\top d$. Then, 
\begin{multline*}
    P_\beta A_{f,g} h = \sum_{i=1}^M \sigma_i \psi_i \left\langle h,\phi_i\right\rangle_{\tilde{H}_{d}}\\
    \iff P_\beta A_{f,g} \delta^\top d = \sum_{i=1}^M \sigma_i w_i^\top \beta \left\langle \delta^\top d, v_i^\top d\right\rangle_{\tilde{H}_{d}}
\end{multline*}
Using the finite-rank representation, the collection $\{(\sigma_i,\phi_i,\psi_i)\}_{i=1}^M$, is an SVD of $P_\beta A_{f,g}|_d$, if for all $\delta\in\mathbb{R}^M$,
\begin{equation}\label{eq:suff_cond_SVD}
    \left(G_\beta^{+} I^{\top}  \delta\right)^\top \beta = \left(\sum_{i=1}^M \sigma_i \left\langle \delta^\top d, v_i^\top d\right\rangle_{\tilde{H}_{d}} w_i^\top \right)\beta.
\end{equation}
Simple matrix manipulations yield the chain of implications
\begin{gather*}
    {\thickmuskip=0mu\thinmuskip=0mu\medmuskip=0mu\eqref{eq:suff_cond_SVD}\impliedby \forall \delta\in\mathbb{R}^M, G_\beta^{+} I^{\top} \delta = \sum_{i=1}^M \sigma_i \left\langle \delta^\top d, v_i^\top d\right\rangle_{\tilde{H}_{d}} w_i}\\
    \iff \forall \delta\in\mathbb{R}^M,G_\beta^{+} I^{\top} \delta 
    = \sum_{i=1}^M \sigma_i  \left(w_i v_i^\top G_d\right) \delta\\
    \impliedby G_\beta^{+} I^{\top} = \sum_{i=1}^M \sigma_i  \left(w_i v_i^\top \right)G_d\\
    \impliedby G_\beta^{+} I^{\top}G_d^{+}  = \sum_{i=1}^M \sigma_i w_i v_i^\top = W\Sigma V^\top,
\end{gather*}
which proves the proposition.
\end{proof}
\begin{remark}
    The standard usage of the term SVD refers to a decomposition using orthonormal left and right singular vectors. The left singular functions $\{\phi_i\}_{i=1}^{M}$ and right singular functions $\{\psi_i\}_{i=1}^{M}$ are not necessarily orthonormal. Therefore, the decomposition in the previous proposition is not an SVD of the finite-rank operator $P_\beta A_{f,g}|_d$.
    
\end{remark}

    
\section{The DCLDMDPID Algorithm} \label{sec:DCLDMDPID}
\subsection{DCLDMD}
In order to identify the parameters of the system \[z_{k+1}=f_0(z_k,\pi) + g_0(z_k,\pi)u_k,\] we first consider a modified system of the form
\begin{equation} \label{eq:modified_system_with_parmeters_as_states}
y_k=x_{k+1}=
    \begin{pmatrix}
        z_{k+1}\\ 
        \pi_{k+1}
    \end{pmatrix}=\begin{pmatrix}
        f_0(z_k)\\ 
        \pi_k
    \end{pmatrix}+
    \begin{pmatrix}
        g_0(z_k)\\ 
        0
    \end{pmatrix} u_k,
\end{equation}
where $\pi_0=\pi$. Then we can use the training dataset $\{ u_i^j, z_i^j, y_i^j ,\pi^j\}_{j=1,i=1}^{P,M}$ to learn a control affine discrete time nonlinear system of the form  \[x_{k+1}=f(x_k) + g(x_k)u_k\]
Motivated by \eqref{eq:infinite_spectral_reconstruction_svd}, assuming that $h_{\mathrm{id},j} \in \tilde{H}_{d}$ for $j=1,\cdots,n$, the system dynamics are approximated using the finite-rank representation, with rank at most $M$, as 
\[
    x_{k+1} \approx \hat{F}_{M} (x_k,u_k) := [P_{\beta} A_{f,g} P_{d} h_{\mathrm{id}}] (x_k) \begin{pmatrix}
        1\\u_k
    \end{pmatrix},
\]    
where $P_{\beta} A_{f,g} P_{d} h_{\mathrm{id}}$ denotes row-wise operation of the operator $P_{\beta} A_{f,g}$ on the function $P_{d} h_{\mathrm{id}}$. 
Using the definition of singular values and singular functions of $P_{\beta} A_{f,g}\mid_d$,
\begin{equation}
    x_{k+1} \approx 
    \sum_{i=1}^M \sigma_i \xi_i w_i^\top \beta (x_k) \begin{pmatrix}
        1\\u_k
    \end{pmatrix} = \xi\Sigma W^\top \beta (x_k) \begin{pmatrix}
        1\\u_k
    \end{pmatrix},
\end{equation}
where $\xi_i \coloneqq \left\langle P_d h_{\mathrm{id}},\phi_i\right\rangle_{\tilde{H}_{d}}$ and $\xi := \begin{bmatrix}
    \xi_1,&\ldots,&\xi_M
\end{bmatrix}$.

The modes $\xi$ can be computed using $\phi_i = v_i^\top d$ as
\begin{gather*}
    \xi = \begin{bmatrix}
    \left\langle P_d h_{\mathrm{id},1},v_1^\top d\right\rangle_{\tilde{H}_{d}},&\ldots,&\left\langle P_d h_{\mathrm{id},1},v_M^\top d\right\rangle_{\tilde{H}_{d}}\\
    \vdots & \ddots & \vdots\\
\left\langle P_d h_{\mathrm{id},n},v_1^\top d\right\rangle_{\tilde{H}_{d}},&\ldots,&\left\langle P_d h_{\mathrm{id},n},v_M^\top d\right\rangle_{\tilde{H}_{d}}
\end{bmatrix}\\
= \begin{bmatrix}
    \left\langle \delta_1^\top d,d_1\right\rangle_{\tilde{H}_{d}},&\ldots,&\left\langle \delta_1^\top d,d_M\right\rangle_{\tilde{H}_{d}}\\
    \vdots & \ddots & \vdots\\
\left\langle \delta_n^\top d,d_1\right\rangle_{\tilde{H}_{d}},&\ldots,&\left\langle \delta_n^\top d,d_M\right\rangle_{\tilde{H}_{d}}
\end{bmatrix} V
= \delta^\top G_d V,
\end{gather*}
where $\delta \coloneqq  \begin{bmatrix} \delta_1, &\ldots, &\delta_n \end{bmatrix}$. Using the reproducing property of the reproducing kernel of $\tilde{H}_d$, the coefficients $\delta_i$ in the projection of $h_{\mathrm{id},i}$ onto $d$ satisfy
{\thinmuskip=0mu \thickmuskip=0mu \medmuskip=0mu \begin{equation*}
    G_d \delta_i = \begin{bmatrix}
        \left\langle\left(h_{\mathrm{id}}\right)_i,d_1\right\rangle_{\tilde{H}_{d}}\\ \vdots \\ \left\langle\left(h_{\mathrm{id}}\right)_i,d_M\right\rangle_{\tilde{H}_{d}}
    \end{bmatrix} = \begin{bmatrix}
        \left(x_1\right)_i \\ \vdots \\ \left(x_M\right)_i 
    \end{bmatrix}.
\end{equation*}}
Letting $D \coloneqq \left(\left(x_j\right)_i\right)_{i,j=1}^{n,M} $ it can be concluded that $ \delta^\top G_d = D$. Finally, the modes $\xi$ are given by $\xi = D V$ and the estimated open-loop model is given by
\begin{multline} 
\label{eq:convergent_closed_loop_model}
    x_{k+1} \approx \hat{F}_{M}(x_k,u_k) = D V\Sigma W^\top \beta(x_k)\begin{pmatrix}
        1\\u_k
    \end{pmatrix} \\= D (G_\beta^{+} I^{\top}G_d^{+})^{\top}\beta(x_k) \begin{pmatrix}
        1\\u_k
    \end{pmatrix}.
\end{multline}
the approximation $\hat{f}_M(x)$, an approximation of the drift dynamics, $f(x)$, is given by the first column of $D (G_\beta^{+} I^{\top}G_d^{+})^{\top}\beta(x_k) $ and $\hat{g}_M(x)$, an approximation of the control-effectiveness matrix, $g(x)$, is given by the last $m$ columns of $D (G_\beta^{+} I^{\top}G_d^{+})^{\top}\beta(x_k) $.
\subsection{Parameter Identification}
Once the a system $\hat{F}$ is obtained, we solve for $\hat{\pi}$ as in \eqref{eq:parameters-diffrence-minimization-with-sysID}. The specific approach used in this paper uses a grid of candidate parameters in the set $\Pi$. Every point in this grid is used as an initial condition for the last $p$ number of states, where $p$ is the dimension of $\pi$. Then, the prediction of $\hat{F}$ is compared with the query dataset as in \eqref{eq:parameters-diffrence-minimization-with-sysID}, and the grid point that minimizes the error metric is $\hat{\pi}$ (see Algorithm \ref{alg:DCLDMDPID}).

\section{Numerical Experiments}\label{sec:NumExp}
As a demonstration of the efficacy of the developed algorithm, we apply the method to the controlled Duffing oscillator.

The controlled Duffing oscillator is a nonlinear dynamical system with state-space form 
\begin{multline}\label{eqn: Duffing}
     \begin{pmatrix}
         \dot{x}_{1} \\
         \dot{x}_{2}
     \end{pmatrix} = \begin{pmatrix}
         x_{2} \\
         -\delta x_{2} -\beta x_{1} -\alpha x_{1}^{3}
     \end{pmatrix} +
     \begin{pmatrix}
         0 \\
         2+\sin(x_{1})
     \end{pmatrix}u
\end{multline}
where $\alpha,\beta,\delta$ are coefficients in $\mathbb{R}$, $[x_{1},x_{2}]^{\top}\in\mathbb{R}^{2}$ is the state, and $u \in \mathbb{R}$ is the control input. For the experiments the parameters are selected to be: $\delta =0$, $\alpha =1$, and $\beta =-1$.

We modify the system to include the parameters as states
\begin{multline}\label{eqn: Duffing_modified}
     \begin{pmatrix}
         \dot{x}_{1} \\
         \dot{x}_{2}\\
         \dot{\alpha}\\
         \dot{\beta}\\
         \dot{\delta}
     \end{pmatrix} = \begin{pmatrix}
         x_{2} \\
         -\delta x_{2} -\beta x_{1} -\alpha x_{1}^{3}\\
         0\\
         0\\
         0
     \end{pmatrix} +
     \begin{pmatrix}
         0 \\
         2+\sin(x_{1})\\
         0\\
         0\\
         0
     \end{pmatrix}u,
\end{multline}
then descretize (\ref{eqn: Duffing_modified}) using a time step of $0.1$ seconds to yield a discrete-time, control-affine dynamical system of the form $x_{k+1} = F(x_{k}) + G(x_{k})u_{k}$. Using the tuples $\{(x_{k}, x_{k+1}, u_{k})\}_{k=1}^{n}$ generated by the dynamical system, we aim to identfiy the system along with parameters $\alpha,\beta,$ and $\delta$, where $\alpha_0=\alpha$,$\beta_0=\beta$ and $\delta_0=\delta$.


In the implementation of DCLDMDPID, we select 20000 initial conditions sampled randomly using a uniform distribution from  the set $[-3,3]\times [-3,3]\times[-3,3]\times[-3,3]\times[-3,3]\subset \mathbb{R}^{5}$ and the control inputs are sampled uniformly from the interval $[-2,2]\subset \mathbb{R}$.

\begin{algorithm}
    \caption{\label{alg:DCLDMDPID}The DCLDMDPID algorithm}
    \begin{algorithmic}[1]
        \renewcommand{\algorithmicrequire}{\textbf{Input:}}
        \renewcommand{\algorithmicensure}{\textbf{Output:}}
        \REQUIRE Training dataset composed of state snapshots $\{z^j_i\}_{i=1}^{M}$, parameter values $\{\pi^j\}_{i=1}^{M}$, control set $\{u^j_i\}_{i=1}^{M}$, a label set $\{w^j_i\}_{i=1}^{M}$ such that $w^j_i=F(z^j_i,u^j_i,\pi^j)$, a query dataset composed of state snapshots $\{z_i\}_{i=1}^{N}$, control set $\{u_i\}_{i=1}^{N}$, a label set $\{w_i\}_{i=1}^{N}$ such that $w_i=F(z_i,u_i,\pi)$,  and reproducing  kernels $\tilde{K}_d$ and $K$ of $\tilde{H}_d$ and $H$, respectively, and a mesh of of parameter space $\Pi_{mesh}$. 
        \ENSURE $\{\xi_j,\sigma_j,\varphi_j,\phi_j\}_{j=1}^{M}$, $\hat{F}$ and $\hat{\pi}$.
        \STATE $x_i=[z_i;\pi_i]$
        \STATE $y_i=[w_i;\pi_i]$
        \STATE $\beta \leftarrow \left[K_{x_i,\overline{u}_i}\right]_{i=1}^M$
        \STATE $G_\beta \leftarrow \left(\left\langle K_{x_i,\overline{u}_i}, K_{x_j,\overline{u}_j} \right\rangle_{H} \right)_{i,j=1}^M$ 
        \STATE$G_{d^M} \leftarrow \left(\left\langle K_d(\cdot,x_i),K_d(\cdot,x_j)\right\rangle_{\tilde{H}_d}\right)_{i,j=1}^M$
        \STATE$I=\left(\left\langle K_d(\cdot,x_i),K_d(\cdot,y_i)\right\rangle_{\tilde{H}_d}\right)_{i,j=1}^M$
        \STATE $ D \leftarrow \left(\left(x_j\right)_i\right)_{i,j=1}^{n,M}$
        \STATE $(W,\Sigma,V)\leftarrow$ SVD of $ G_\beta^{+} I^{\top} G_d^{+} $
        
        \STATE $\xi \leftarrow DV$
        \STATE $\phi_j \leftarrow \sum_{i=1}^M\leftarrow (V)_{i,j} \left(K_d(\cdot,x_i)\right)$
        \STATE $\psi_j \leftarrow \sum_{i=1}^M  (W)_{i,j}\left[\begin{bmatrix} 1 & u_i(t)^{\top} \end{bmatrix} K_{x_i,\overline{u}_i}\right]\left(\cdot\right) $\label{line:psi}
        \STATE $\hat{F}(x_k,u_k) \leftarrow D V\Sigma W^\top \beta(x_k)\begin{pmatrix}
        1\\u_k
    \end{pmatrix}$
    \STATE $\hat{\pi} \leftarrow \argmin_{\eta\in \Pi_{mesh} }\sum_{k=1}^N(w_k-\hat{F}([z_k;\eta],u_k))^2$ 
        \RETURN $\{\xi_j,\sigma_j,\varphi_j,\phi_j\}_{j=1}^{M}$, $\hat{F}$ and $\hat{\pi}$.
    \end{algorithmic} 
\end{algorithm}

The Gaussian  radial basis function kernel $\Tilde{K}(x,y) = \mathrm{e}^{\frac{-{\left\Vert x-y\right\Vert}_{2}^2}{\sigma}}$ is used for $\Tilde{H}$, with kernel width $\sigma=20$. For $H$, we associate to each pair $\{(x_{k},u_{k})\}_{k=1}^{n}$ a kernel $K_{x_{k},\bar{u}_k} \coloneqq \begin{pmatrix}
    1&u_k^\top
\end{pmatrix}K_{x_{k}} \in H$. Here we use the kernel operator $K_{x_i} \coloneqq \mathrm{diag}\begin{pmatrix}
\Tilde{K}_{x_1} & \cdots & \Tilde{K}_{x_{m+1}}
\end{pmatrix}$ where $\Tilde{K}_{x_j}(y) = \mathrm{e}^{\frac{-{\left\Vert x_{j}-y\right\Vert}_{2}^2}{\sigma}}$ for $j=1,\ldots,m+1$, with $\sigma = 20$.  
Lastly, we select $\varepsilon = 10^{-6}$ for regularization of the Gram matrices in order to ensure invertibility of both $\Tilde{G}$ and $G$, instead of the pseudoinverse, in the finite-rank representation (see Algorithm \ref{alg:DCLDMDPID}).

Once a system $\hat{F}$ is identified, the process of identifying the parameters is conducted by comparing the trajectories of the identified system using permutations of the possible parameters, i.e. different initial conditions for the added states, with the trajectories of the true system in \eqref{eqn: Duffing}. the permutation of the possible parameters are the nodes of an equally spaced mesh in $[-3,3]\times[-3,3]\times[-3,3]\subset \mathbb{R}^{3}$ with a spacing of $0.5$ corresponding to the three parameters $\alpha$, $\beta$, and $\delta$. The permutation that produces the smallest mean squared error (MSE) $\frac{1}{50}\sum_{i=1}^{50} ([x_1;x_2]-\hat{F}_{(1,2)}(x_i,u_i))^2$ is determined to be the parameter estimations of $\pi=[\alpha,\beta,\delta]$, where $\hat{F}_{(1,2)}(x_i,u_i)$ is the first and second state output of $\hat{F}$.

\begin{figure}
  \centering
   \begin{tikzpicture}
    \begin{axis}[
        xlabel={Time [s]},
        ylabel={},
        legend pos = outer north east,
        legend style={nodes={scale=0.5, transform shape}},
        enlarge y limits=0.05,
        enlarge x limits=0,
        height = 1\columnwidth,
        width = 0.9\columnwidth,
        label style={font=\scriptsize},
        tick label style={font=\scriptsize}
    ]
        \addplot [only marks, blue] table [x index=0, y index=1]{data/fg_param_trajectory.dat};
        \addplot [only marks, red] table [x index=0, y index=2]{data/fg_param_trajectory.dat};
        \addplot [only marks, green, dashed] table [x index=0, y index=3]{data/fg_param_trajectory.dat};
        \addplot [only marks, orange, dashed] table [x index=0, y index=4]{data/fg_param_trajectory.dat};
        \addplot [only marks, yellow, dashed] table [x index=0, y index=5]{data/fg_param_trajectory.dat};

        \addplot [only marks,mark=x, black ] table [x index=0, y index=1]{data/fgHat_param_trajectory.dat};
        \addplot [only marks,mark=star, black ] table [x index=0, y index=2]{data/fgHat_param_trajectory.dat};
        \addplot [only marks,mark=diamond*, black , dashed] table [x index=0, y index=3]{data/fgHat_param_trajectory.dat};
        \addplot [only marks,mark=+, black , dashed] table [x index=0, y index=4]{data/fgHat_param_trajectory.dat};
        \addplot [only marks,mark=triangle*, black , dashed] table [x index=0, y index=5]{data/fgHat_param_trajectory.dat};
         \legend{ $x_{1}(t)$,$x_{2}(t)$,$\alpha$,$\beta$,$\delta$,$\hat{x}_{1}(t)$,$\hat{x}_{2}(t)$,$\hat{\alpha}(t)$,$\hat{\beta}(t)$,$\hat{\delta}(t)$}
    \end{axis}
\end{tikzpicture}
    \caption{A comparison between the true system trajectories $x_1$ and $x_2$ and the trajectories of the identified system $\hat{x}_1$ and $\hat{x}_2$. In addition to the comparison of the true system parameters $\alpha$, $\beta$, and $\delta$ and their corresponding parameters as states $\hat{\alpha}$, $\hat{\beta}$, and $\hat{\delta}$.}
    \label{fig:DuffingDiscrete_with_parameter_ID}
\end{figure}
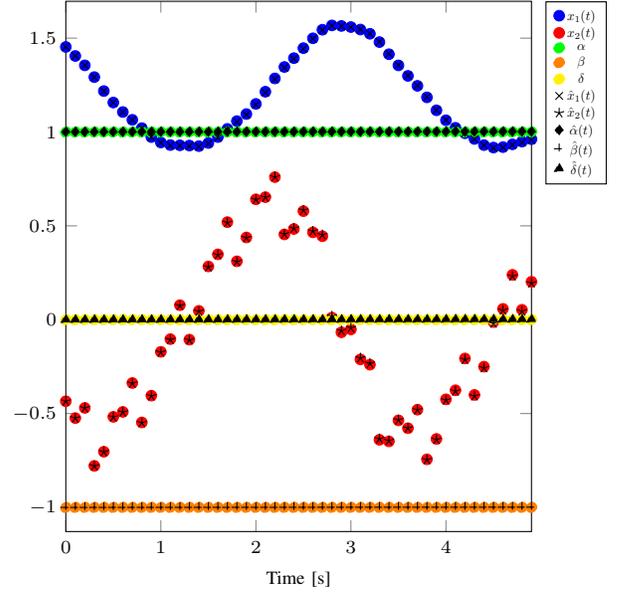

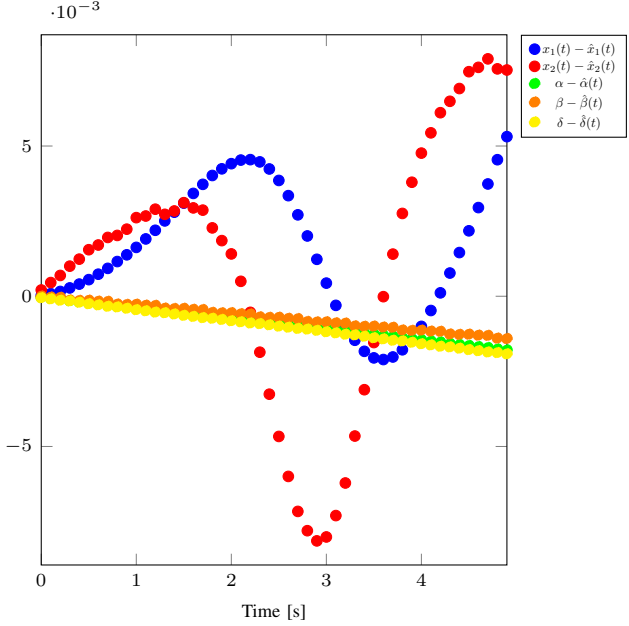
\begin{figure}
  \centering
   \begin{tikzpicture}
    \begin{axis}[
        xlabel={Time [s]},
        ylabel={},
        legend pos = outer north east,
        legend style={nodes={scale=0.5, transform shape}},
        enlarge y limits=0.05,
        enlarge x limits=0,
        height = 1\columnwidth,
        width = 0.9\columnwidth,
        label style={font=\scriptsize},
        tick label style={font=\scriptsize}
    ]
        \addplot [only marks, blue] table [x index=0, y index=1]{data/fgTilda_param_trajectory.dat};
        \addplot [only marks, red] table [x index=0, y index=2]{data/fgTilda_param_trajectory.dat};
        \addplot [only marks, green, dashed] table [x index=0, y index=3]{data/fgTilda_param_trajectory.dat};
        \addplot [only marks, orange, dashed] table [x index=0, y index=4]{data/fgTilda_param_trajectory.dat};
        \addplot [only marks, yellow, dashed] table [x index=0, y index=5]{data/fgTilda_param_trajectory.dat};

         \legend{ $x_{1}(t)-\hat{x}_{1}(t)$,$x_{2}(t)-\hat{x}_{2}(t)$,$\alpha-\hat{\alpha}(t)$,$\beta-\hat{\beta}(t)$,$\delta-\hat{\delta}(t)$}
    \end{axis}
\end{tikzpicture}
    \caption{The difference between the trajectories of the true system and the identified system $x_1-\hat{x}_1$ and $x_2-\hat{x}_2$. And the difference between the true parameters $\alpha, \beta, \delta$ and the change in the parameters as states form the identified system $\hat{\alpha}, \hat{\beta}, \hat{\delta}$.}
    \label{fig:DuffingDiscrete_with_parameter_ID_error}
\end{figure}

\begin{figure}
  \centering
   \begin{tikzpicture}
    \begin{axis}[
        xlabel={Distance},
        ylabel={MSE},
        legend pos = outer north east,
        legend style={nodes={scale=0.5, transform shape}},
        enlarge y limits=0,
        enlarge x limits=0,
        xmax = 3,
        ymax = 0.4,
        height = 1\columnwidth,
        width = 0.9\columnwidth,
        label style={font=\scriptsize},
        tick label style={font=\scriptsize}
    ]
        \addplot [only marks, blue] table [x index=0, y index=1]{data/MSE_vs_distence.dat};
        \addplot [only marks, red] table [x index=0, y index=1]{data/optimal_MSE_vs_distence.dat};

    \end{axis}
\end{tikzpicture}
    \caption{The MSE computed using different parameter values on the mesh that contains the true parameter values, plotted against the Euclidean distance of the mesh point from the exact values $\delta =0$, $\alpha =1$, and $\beta =-1$. The red dot shows the parameter values with the smallest MSE.}
    \label{fig:MSE_vs_dist}
\end{figure}

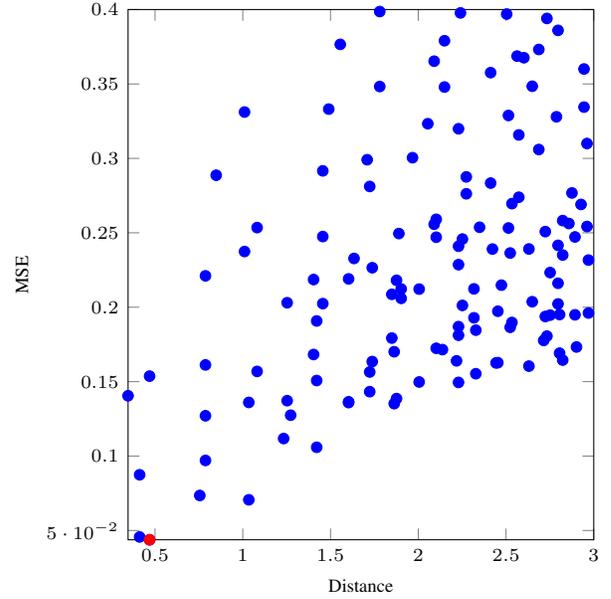
\begin{figure}
  \centering
   \begin{tikzpicture}
    \begin{axis}[
        xlabel={Distance},
        ylabel={MSE},
        legend pos = outer north east,
        legend style={nodes={scale=0.5, transform shape}},
        enlarge y limits=0,
        enlarge x limits=0,
        xmax = 3,
        ymax = 0.4,
        height = 1\columnwidth,
        width = 0.9\columnwidth,
        label style={font=\scriptsize},
        tick label style={font=\scriptsize}
    ]
        \addplot [only marks, blue] table [x index=0, y index=1]{data/MSE_vs_distence_off_grid.dat};
        \addplot [only marks, red] table [x index=0, y index=1]{data/optimal_MSE_vs_distence_off_grid.dat};

    \end{axis}
\end{tikzpicture}
    \caption{The MSE computed using different parameter values on the mesh that does not contain the true parameter values, plotted against the Euclidean distance of the mesh point from the exact values $\delta =0$, $\alpha =1$, and $\beta =-1$. The red dot shows the parameter values with the smallest MSE}
    \label{fig:MSE_vs_dist_off_grid}
\end{figure}

\subsection{Discussion}

Figure \ref{fig:DuffingDiscrete_with_parameter_ID} shows the trajectories of the true system and the identified system, along with the true parameters and the corresponding identified parameters as states, where the initial conditions of the parameters as states are determined to be $\hat{\alpha}(0)=1$, $\hat{\beta}(0)=-1$, and $\hat{\delta}(0)=0$. Figure \ref{fig:DuffingDiscrete_with_parameter_ID_error} shows the trajectory differences between the true system and the identified system, along with the drift of the parameters as states from the true values, all of which are at most in the order of $10^{-3}$ over the span of 5 seconds. Note that naturally, obtaining the exact parameter using this method is not guaranteed. It was only possible in this example because the mesh that was used for parameter identification happened to contain the exact parameter vector. If the true parameter vector is not in the mesh, the algorithm returns the best-fitting parameter vector from the mesh.

Figure \ref{fig:MSE_vs_dist} shows values of the MSE computed using different parameter combinations on the mesh against the Euclidean distance of the vector of parameters $[\delta,\,\alpha,\,\beta]^\top$ form the vector $[0,\,1,\,-1]^\top$ of true values. As seen in Figure \ref{fig:MSE_vs_dist}, for the case where the true parameter is in the mesh, we see a clear separation between the error generated by the true parameter and those generated by other parameters on the mesh. Note that since the optimization problem in \eqref{eq:parameters-diffrence-minimization-with-sysID} is not expected to be convex in general, we expect such separation only locally, not globally as seen in Figure \ref{fig:MSE_vs_dist}.

In order to demonstrate the effect of the choice of the mesh, another parameter identification experiment was conducted using the same training and query datasets but using a shifted mesh that does not contain the true parameter values $\delta =0$, $\alpha =1$, and $\beta =-1$. In this case we use a mesh composed of equally spaced nodes in $[-3.3,2.7]\times[-3.3,2.7]\times[-3.3,2.7]]\subset \mathbb{R}^{3}$ with a spacing of $0.5$. Figure \ref{fig:MSE_vs_dist_off_grid} shows values of the MSE against the Euclidean distance from the true parameter in the parameter space, similar to \ref{fig:MSE_vs_dist}. In this case, it can be seen that the parameter values that produce the smallest MSE ($\hat{\alpha}=0.7$, $\hat{\beta}=-1.3 $, and $\hat{\delta}=-0.3$.) is not the closest node in the mesh to the true values in Euclidean distance, but we still see a separation between a cluster of `low-error' parameter vectors and other parameter vectors on the mesh.

\section{Conclusion} \label{sec:concl}
This paper introduces a novel approach towards the construction of a finite-rank representation of the control Liouville operator. New results on the construction of singular values and functions of the finite rank operator using singular values and vectors of a matrix representation are also obtained. Once the singular values and functions are at hand, the  drift dynamics and the control effectiveness can be approximated, which facilitates the prediction of the states of the dynamical system in response to an open-loop control signal. Once a predictive model is generated, its predictions can be compared with the trajectory of the true system using different guesses of the parameters as initial conditions of the augmented states. A weakness of the current method is that the parameters identified can only choose between the nodes of the mesh that is defined by the user. If the mesh does not include the exact parameters, it is impossible to identify the exact parameters. 

A numerical experiment using the Duffing oscillator is presented to demonstrate the performance of the developed technique, while future work will focus on analyzing the effects of integration error and measurement noise on the DCLDMDPID method, incorporating partial system knowledge to enhance identification, investigating data-driven sensitivity analysis to enable gradient-based parameter identification and investigating other gradient-free global optimization techniques in place of the current grid-based approach.

\bibliographystyle{ieeeTRAN}
\bibliography{scc, sccmaster,DCLDMDbibliography,DCLDMDTemp,temp}

\end{document}